\theoremstyle{ams}
\newtheorem{theorem}{Theorem}[section]
\newtheorem{proposition}[theorem]{Proposition}
\theoremstyle{definition}
\newtheorem{conjecture}[theorem]{Conjecture}
\newtheorem{remark}[theorem]{Remark}
\numberwithin{figure}{section} \numberwithin{equation}{section}
\newcommand{\Q}{\mathbb{Q}}
\newcommand{\R}{\mathbb{R}}
\begin{document}
\title[Unimodality of the Betti numbers]{Unimodality of the Betti numbers for Hamiltonian circle action with isolated fixed points}

\author[Y. Cho]{Yunhyung Cho}
\address{Department of Mathematical Sciences, KAIST, 335 Gwahangno,
Yu-sung Gu, Daejeon 305-701, Korea}
\email{yh\_cho@kaist.ac.kr}
\author[M. K. Kim]{Min Kyu Kim*}
\address{Department of Mathematics Education,
Gyeongin National University of Education, San 59-12, Gyesan-dong,
Gyeyang-gu, Incheon, 407-753, Korea}
\email{mkkim@kias.re.kr}

\thanks{* supported by GINUE research fund}

\maketitle

\begin{abstract}

Let $(M,\omega)$ be an eight-dimensional closed
symplectic manifold equipped with a Hamiltonian
circle action with only isolated fixed points. In
this article, we will show that the Betti numbers
of $M$ are unimodal, i.e. $b_0(M) \leq b_2(M)
\leq b_4(M)$.

\end{abstract}

\section{introduction}

Let $(M,\omega,J)$ be an $n$-dimensional closed
K\"{a}hler manifold. Then $(M,\omega,J)$
satisfies the hard Lefschetz property so that the
Betti numbers are unimodal, i.e.
$$b_i(M) \leq b_{i+2}(M)$$ for all $i < n$.
In symplectic case, the unimodality of the Betti
numbers is obviously not clear in general. In
this paper, we will consider the following
conjectural question which is addressed in
\cite{JHKLM}.

\begin{conjecture} \label{conjecture1.1}
Let $(M,\omega)$ be a closed symplectic manifold
with a Hamiltonian circle action. Assume that all
fixed points are isolated. Then the Betti numbers
are unimodal.
\end{conjecture}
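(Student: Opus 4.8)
The plan is to translate the statement into Morse theory and then into a purely combinatorial inequality about the fixed-point data. First I would invoke the classical fact (Frankel, Atiyah, Kirwan) that the moment map $H\colon M\to\R$ of a Hamiltonian $S^1$-action is a perfect Morse--Bott function whose critical set is exactly the fixed-point set $M^{S^1}$; since all fixed points are isolated, $H$ is an honest Morse function and each critical point $p$ has even Morse index $2\lambda_p$, where $\lambda_p$ is the number of negative weights of the isotropy $S^1$-representation on $T_pM$. Perfectness then gives
$$P_t(M)=\sum_{p\in M^{S^1}}t^{2\lambda_p},\qquad\text{so}\qquad b_{2k}(M)=\#\{p\in M^{S^1}:\lambda_p=k\},$$
with all odd Betti numbers vanishing. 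Writing $\dim M=2n$, the conjecture becomes the combinatorial assertion that the sequence $\bigl(\#\{p:\lambda_p=k\}\bigr)_k$ is unimodal, peaking at the middle; by Poincar\'e duality (equivalently, replacing $H$ by $-H$, which sends $\lambda_p$ to $n-\lambda_p$) it suffices to prove $b_{2k}\le b_{2k+2}$ for $2k<n$.

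Next I would try to produce these inequalities by an injection $\{p:\lambda_p=k\}\hookrightarrow\{p:\lambda_p=k+1\}$ built from the geometry of the action. The natural source of such maps is the one-skeleton: the closures of the nontrivial orbits with finite stabiliser are invariant $2$-spheres joining pairs of fixed points, and along such a sphere the index changes by an amount controlled by the weight labelling its two poles. Organising these spheres into the moment (GKM-type) graph of the action, unimodality would follow from an orientation/flow statement on this graph. To turn the weight labels into usable relations I would use equivariant formality, so that $H^*_{S^1}(M)$ is a free $\R[u]$-module restricting injectively to $\bigoplus_p H^*_{S^1}(p)$, together with the Atiyah--Bott--Berline--Vergne localization identities
$$\sum_{p\in M^{S^1}}\frac{1}{\prod_{i}w_i^{(p)}}=0$$
in low equivariant degrees, where $w_i^{(p)}$ are the weights at $p$; these impose strong arithmetic constraints on how many fixed points can occur with each value of $\lambda_p$.

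An alternative, and probably more robust, route runs through symplectic reduction. Let $M_c=H^{-1}(c)/S^1$ be the reduced space at a regular value $c$; as $c$ increases past a critical level carrying a single fixed point $p$ of signature $(\lambda_p,n-\lambda_p)$, the space $M_c$ undergoes a weighted blow-up/blow-down whose effect on $P_t(M_c)$ is governed by the Duistermaat--Heckman/Guillemin--Sternberg wall-crossing formula. Summing these jumps reconstructs $P_t(M)$ from the $2(n-1)$-dimensional reduced spaces, which opens the door to an induction on $\dim M$: if the reduced spaces inherit enough hard-Lefschetz-type positivity, their unimodality should propagate to $M$. The hard part, and the reason the conjecture remains open beyond the $\dim M\le 8$ range treated in this paper, lies exactly here: there is no a priori geometric injection realising $b_{2k}\le b_{2k+2}$, and neither the localization relations nor the wall-crossing jumps are manifestly sign-definite in the middle degrees once the number of indices grows. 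In low dimension the finitely many localization identities pin the index distribution down completely, but controlling the combinatorics uniformly in $n$ --- that is, upgrading the wall-crossing jumps to a genuine monotonicity statement for the reduced Betti numbers, or exhibiting an explicit upward flow on the moment graph --- is the essential obstacle that a full proof must overcome.
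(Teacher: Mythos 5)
Your proposal does not prove the statement, and to your credit you say so yourself: after a correct Morse-theoretic translation (the moment map is a perfect Morse function with only even indices, so $b_{2k}(M)=\#\{p\in M^{S^1}:\lambda_p=k\}$, odd Betti numbers vanish, and Poincar\'e duality reduces unimodality to $b_{2k}\le b_{2k+2}$ for $2k<n$), everything that follows is a description of strategies rather than an argument. Neither route is carried out: you never construct the injection $\{\lambda_p=k\}\hookrightarrow\{\lambda_p=k+1\}$ from the moment graph, and the wall-crossing induction presupposes exactly the hard-Lefschetz-type positivity for reduced spaces (which are in general orbifolds) that is unknown and is essentially equivalent in difficulty to the conjecture itself. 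The missing idea is the inequality $b_{2k}\le b_{2k+2}$; no step in the proposal establishes it in any degree. This is consistent with the status of the statement in the paper: it is labelled a conjecture, attributed to \cite{JHKLM}, and the paper itself proves only the eight-dimensional case (Theorem \ref{main}), explicitly disclaiming any higher-dimensional method.

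It is worth contrasting your sketch with how the paper actually settles dimension $8$, since your remark that localization identities ``pin the index distribution down'' in low dimensions gestures at the right toolbox but misses the actual mechanism. The paper does not extract arithmetic constraints on the weights from identities such as $\sum_p 1/\prod_i w_i^{(p)}=0$. Instead it argues by contradiction from $b_2>b_4$: using the McDuff--Tolman canonical classes $\alpha_1,\dots,\alpha_k\in H^2_{S^1}(M)$ attached to the index-$2$ fixed points, the assumption $b_2>b_4$ yields a nonzero combination $\alpha=\sum c_i\alpha_i$ whose restriction vanishes at every index-$4$ fixed point. The key trick --- the sign-definiteness you correctly identified as the obstacle --- is achieved by normalizing the moment map so that $\max H=0$ and multiplying by the equivariant symplectic class: for $\beta=\alpha^2\cdot\widetilde{\omega}_H$ one has $\beta|_F=(\alpha|_F)^2\,H(F)u$ with all coefficients non-positive, nonzero at some index-$2$ point, while $\beta$ survives only at index-$2$ and index-$6$ points, where the product of weights in $e_F$ has a fixed sign. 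Since $\deg\beta=6<8$, ABBV localization forces $\int_M\beta=0$, yet every term $\beta|_F/e_F$ has the same sign and one is nonzero --- a contradiction. This squaring-plus-normalization device is precisely what makes the localization sum sign-definite in middle degree for $\dim M=8$, and the absence of an analogue in higher dimensions is a sharper formulation of the obstacle you describe at the end of your proposal.
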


The reason why we put the condition
``\textit{isolated fixed points}" is that, as far
as the authors know, all known examples of
Hamiltonian circle action with only isolated
fixed points admit a K\"{a}hler structure. In
particular, Y. Karshon \cite{Ka} proved that
every symplectic 4-manifold with Hamiltonian
circle action with only isolated fixed points is
symplectomorphic to some smooth projective toric
variety.
    In this paper, we will show that
    \begin{theorem}\label{main}
        Let $(M,\omega)$ be an 8-dimensional closed symplectic manifold equipped with a Hamiltonian circle action with only isolated fixed points. Then the Betti numbers of $M$ are unimodal, i.e.
        $b_0(M) \leq b_2(M) \leq b_4(M)$.
    \end{theorem}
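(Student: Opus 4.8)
The plan is to run Morse theory on the moment map $\mu\colon M \to \R$ and then read off the two required inequalities from the fixed-point data together with the Hirzebruch signature. Since $M$ is closed and the fixed points are isolated, $\mu$ is a Morse function whose critical points are precisely the fixed points, and the Morse index at a fixed point $p$ equals $2\lambda(p)$, where $\lambda(p)$ is the number of negative weights of the isotropy representation on $T_pM$. All indices are even, so $\mu$ is a perfect Morse function: the odd Betti numbers vanish and $b_{2i}(M) = f_i$, where $f_i := \#\{p : \lambda(p) = i\}$. Because $M$ is connected and the extrema of $\mu$ are attained on connected fixed components, the minimum and maximum are single points, so $f_0 = f_4 = 1$; Poincar\'e duality gives $f_1 = f_3$. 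Hence $(b_0,\dots,b_8) = (1,0,f_1,0,f_2,0,f_1,0,1)$, and the theorem reduces to the two inequalities $1 \le f_1$ and $f_1 \le f_2$.

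The first inequality is immediate: $[\omega] \neq 0$ in $H^2(M;\R)$, so $b_2 = f_1 \ge 1 = b_0$. The entire content of the theorem therefore lies in the second inequality $f_1 \le f_2$, i.e. $b_2 \le b_4$.

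For this I would compute the signature $\tau(M)$ in two ways. On one hand, the Atiyah--Singer $G$-signature theorem (equivalently, Berline--Vergne localization of the $L$-class) expresses $\tau(M)$ as a sum of local contributions at the fixed points, each equal to $\prod_{j=1}^4 \mathrm{sign}\bigl(w_j(p)\bigr) = (-1)^{\lambda(p)}$, whence
\[
\tau(M) = \sum_{p} (-1)^{\lambda(p)} = f_0 - f_1 + f_2 - f_3 + f_4 = 2 - 2f_1 + f_2 .
\]
On the other hand, by definition $\tau(M) = b_4^+ - b_4^-$ is the signature of the intersection form on $H^4(M;\R)$, and $b_4^+ + b_4^- = b_4 = f_2$. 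Eliminating $b_4^-$ gives the exact identity $b_4^+ = \tfrac{1}{2}\bigl(\tau(M) + f_2\bigr) = 1 - f_1 + f_2$.

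Finally, the intersection form on $H^4(M;\R)$ is positive on the class $[\omega]^2$, since $\int_M [\omega]^2 \smile [\omega]^2 = \int_M \omega^4 > 0$; hence $b_4^+ \ge 1$. Combined with $b_4^+ = 1 - f_1 + f_2$, this yields $f_2 \ge f_1$, that is $b_2 \le b_4$, completing the proof. The crux is the pairing of the two expressions for the signature: localization converts the purely topological invariant $\tau(M)$ into the alternating sum of the $f_i$, while the positivity of $[\omega]^2$ supplies the one unit of positive middle cohomology that upgrades the trivial estimate $f_2 \ge f_1 - 1$ (from $b_4^+ \ge 0$) to the sharp bound $f_2 \ge f_1$. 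The steps to watch are the sign normalization in the $G$-signature formula (pinned down by requiring the minimum, whose weights are all positive, to contribute $+1$) and the input from Morse theory that the extrema are isolated, so that $f_0 = f_4 = 1$.
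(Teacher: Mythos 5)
Your proof is correct, but it takes a genuinely different route from the paper. The paper never mentions the signature: it takes the McDuff--Tolman canonical classes $\alpha_1,\dots,\alpha_k$ attached to the index-$2$ fixed points, uses the assumption $b_2>b_4$ to produce a nonzero combination $\alpha$ vanishing at every index-$4$ point, and then applies ABBV localization to the degree-$6$ class $\alpha^2\cdot\widetilde{\omega}_H$ (with the moment map normalized so that $\max H=0$) to force a sign contradiction among the contributions of the index-$2$ and index-$6$ points. You instead combine the Atiyah--Singer $G$-signature theorem, which under the stated hypotheses gives $\tau(M)=\sum_p(-1)^{\lambda(p)}=2-2f_1+f_2$, with the elementary observation that $[\omega]^2$ has positive self-intersection, so $b_4^+\geq 1$; together with $b_4^++b_4^-=f_2$ this yields $b_4^+=1-f_1+f_2\geq 1$, i.e.\ $f_1\leq f_2$. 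Both arguments are sound, and both are specific to the middle dimension of an $8$-manifold. Yours is shorter, avoids the existence and uniqueness of canonical classes entirely, and actually proves more: it computes $b_4^\pm$ exactly ($b_4^+=b_4-b_2+1$, $b_4^-=b_2-1$), exhibiting the failure of unimodality as equivalent to $b_4^+=0$, which $[\omega]^2$ forbids. The paper's argument stays inside equivariant cohomology and the localization formula it has already set up, which is arguably more in the spirit of possible higher-dimensional generalizations. One point to tighten in your write-up: the local contribution of a fixed point in the $G$-signature formula is $\prod_j\frac{t^{w_j}+1}{t^{w_j}-1}$, not literally $\prod_j\mathrm{sign}(w_j)$; you obtain the latter only for the \emph{sum}, by noting that the total is a rational function of $t$ that is constant (equal to $\tau(M)$, since the connected group $S^1$ acts trivially on cohomology) and then letting $t\to\infty$. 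With that said, the identity $\tau(M)=\sum_p(-1)^{\lambda(p)}$ is a standard consequence and your sanity check at the minimum (all weights positive, contribution $+1$) confirms the normalization.
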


We would like to give a remark that our method to
approach this problem is purely ``topological" in
the sense that we do not use any geometric
structure, like an almost complex structure or
metric. Moreover, we cannot be sure whether our
method does work in higher dimensional cases.

\section{Equivariant chomology}

In this section, we briefly review an elementary
equivariant cohomology theory and the
localization theorem for a circle action which
will be used in Section 3. Let $S^1$ be a unit
circle group and let $M$ be an $S^1$-manifold.
Then the equivariant cohomology $H^*_{S^1}(M)$ is
defined by $$H^*_{S^1}(M) := H^*(M \times_{S^1}
ES^1)$$ where $ES^1$ is a contractible space on
which $S^1$ acts freely. Since $M \times_{S^1}
ES^1$ has a natural $M$-bundle structure over the
classifying space $BS^1 := ES^1 / S^1$, the
equivariant cohomology $H^*_{S^1}(M)$ admits a
$H^*(BS^1)$-module structure. Note that
$H^*(BS^1; \R)$ is isomorphic to the polynomial
ring $\R[u]$ where $u$ is of degree two. For the
fixed point set $M^{S^1}$, the inclusion map $i :
M^{S^1} \hookrightarrow M$ induces a
$H^*(BS^1)$-algebra homomorphism $$i^* :
H^*_{S^1}(M) \rightarrow H^*_{S^1}(M^{S^1}) \cong
\bigoplus_{F \subset M^{S^1}} H^*(F)\otimes
H^*(BS^1)$$ and we call $i^*$ a
\textit{restriction map to the fixed point set}.
Note that for an inclusion $i_F : F
\hookrightarrow M^{S^1}$, it induces a natural
projection $i_F^* : H^*_{S^1}(M^{S^1})
\rightarrow H^*_{S^1}(F) \cong H^*(F)\otimes
H^*(BS^1).$ For every $\alpha \in H^*_{S^1}(M)$,
we will denote by $\alpha|_F$ an image $i_F^* (
i^*(\alpha)).$ The main technique for proving
Theorem \ref{main} is the following, which is
called Atiyah-Bott-Berlin-Vergne localization
theorem.

\begin{theorem}[A-B-B-V localization theorem]\label{localization}
Let $M$ be a compact manifold with a circle
action with isolated fixed points. Let $\alpha
\in H^*_{S^1}(M)$. Then as an element of $\Q(u)$,
we have
    $$\int_M \alpha = \sum_{F \in M^{S^1}} \frac{\alpha|_F}{e_F}$$ where the sum is taken over all fixed points, and $e_F$ is the equivariant Euler class of the normal bundle to $F$.
\end{theorem}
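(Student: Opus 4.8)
The plan is to deduce the integration formula from the algebraic \emph{localization principle} for the $\R[u]$-module $H^*_{S^1}(M)$, together with the Thom isomorphism and the functoriality of the equivariant pushforward. I work with real coefficients, so that $H^*(BS^1) \cong \R[u]$ and $\int_M$ denotes the equivariant pushforward $\pi_{M*} : H^*_{S^1}(M) \to H^*_{S^1}(\mathrm{pt}) = \R[u]$ along $\pi_M : M \to \mathrm{pt}$; the right-hand side is then interpreted in the fraction field $\Q(u)$. The central claim to establish is that the restriction map $i^* : H^*_{S^1}(M) \to H^*_{S^1}(M^{S^1})$ becomes an isomorphism after tensoring with $\Q(u)$ over $\R[u]$.

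First I would prove the torsion lemma: if $S^1$ acts on a compact space $X$ with only finite stabilizers (a locally free action), then $H^*_{S^1}(X)$ is a torsion $\R[u]$-module, so $H^*_{S^1}(X) \otimes_{\R[u]} \Q(u) = 0$. The point is that the Borel fibration $X \times_{S^1} ES^1 \to X/S^1$ has fibers that are classifying spaces of the finite stabilizer groups, hence rationally acyclic; therefore $H^*_{S^1}(X) \cong H^*(X/S^1)$. Since $X/S^1$ is compact and finite-dimensional its cohomology vanishes in high degrees, so multiplication by the degree-two class $u$ is nilpotent and every element is killed by a power of $u$. I would then apply this to $X = M \setminus M^{S^1}$: every non-fixed point has a proper, hence finite, closed stabilizer, so the action on the complement of the fixed set is locally free.

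Next I would feed the torsion lemma into the long exact sequence of the pair $(M, M \setminus M^{S^1})$ in equivariant cohomology. Because the middle term $H^*_{S^1}(M \setminus M^{S^1})$ is torsion, the natural map $H^*_{S^1}(M, M \setminus M^{S^1}) \to H^*_{S^1}(M)$ is an isomorphism over $\Q(u)$; by excision and the equivariant Thom isomorphism for the normal bundle of $M^{S^1}$, the relative group is identified with $H^*_{S^1}(M^{S^1})$ up to a degree shift, and chasing the identifications shows that $i^*$ is an isomorphism over $\Q(u)$. The explicit inverse is produced by the Gysin pushforwards $(i_F)_* : H^*_{S^1}(F) \to H^*_{S^1}(M)$: the self-intersection formula $i_F^*(i_F)_* = e_F$ (multiplication by the equivariant Euler class of the normal bundle) together with $i_G^*(i_F)_* = 0$ for $G \neq F$ shows that $\beta \mapsto \sum_F (i_F)_*(e_F^{-1}\beta|_F)$ inverts $i^*$. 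Here I must note that for an \emph{isolated} fixed point $F$ the Euler class is $e_F = \prod_j w_j u$, where the $w_j$ are the nonzero isotropy weights, so $e_F$ is a nonzero element of $\R[u]$ and is genuinely invertible in $\Q(u)$; this is exactly where the isolatedness hypothesis enters.

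Finally I would assemble the formula. Over $\Q(u)$ the invertibility of $i^*$ gives, for every $\alpha \in H^*_{S^1}(M)$, the class-level identity $\alpha = \sum_{F} (i_F)_*\left(\alpha|_F / e_F\right)$. Applying $\int_M = \pi_{M*}$ and using functoriality of the pushforward, $\pi_{M*}(i_F)_* = \pi_{F*}$, together with the fact that integration over a point is the identity, yields
$$\int_M \alpha \;=\; \sum_F \pi_{F*}\!\left(\frac{\alpha|_F}{e_F}\right) \;=\; \sum_F \frac{\alpha|_F}{e_F},$$
which is the asserted formula. The main obstacle is the torsion lemma and its transfer through the exact sequence: establishing $H^*_{S^1}(X) \cong H^*(X/S^1)$ rationally for a locally free action and deducing that $i^*$ localizes to an isomorphism is the real content, whereas the Thom isomorphism, the self-intersection formula, and the final bookkeeping are comparatively routine.
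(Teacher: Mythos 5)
The paper offers no proof of Theorem \ref{localization} at all --- it is quoted as the classical Atiyah--Bott/Berline--Vergne theorem and used as a black box in Section 3 --- so your proposal cannot be compared against an internal argument, only against the standard literature; there it is recognizably the canonical Atiyah--Bott proof, correctly assembled. The skeleton is right: torsion of $H^*_{S^1}$ over $\R[u]$ for locally free actions, flatness of the localization $\R[u] \to \Q(u)$ applied to the long exact sequence of the pair $(M, M\setminus M^{S^1})$, the equivariant Thom isomorphism identifying the relative term with $H^*_{S^1}(M^{S^1})$ so that $i^*$ becomes invertible over $\Q(u)$ with explicit inverse $\beta \mapsto \sum_F (i_F)_*\bigl(e_F^{-1}\,\beta|_F\bigr)$ via the self-intersection formula $i_F^*(i_F)_* = e_F$, and finally $\pi_{M*}\circ (i_F)_* = \pi_{F*} = \mathrm{id}$ on a point. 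Your identification of where isolatedness enters --- $e_F = \bigl(\prod_j w_j\bigr)u^{n} \neq 0$ in $\R[u]$ because no isotropy weight vanishes at an isolated fixed point --- is exactly the right remark, and it is also what makes each summand a legitimate element of $\Q(u)$.

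Two slips are worth repairing. First, you state the torsion lemma for \emph{compact} $X$ but then apply it to $X = M\setminus M^{S^1}$, which is open, hence noncompact. Either retract: $M\setminus M^{S^1}$ equivariantly deformation retracts onto the complement of small open invariant tubular neighborhoods of the fixed points, which is a compact invariant set on which the action is locally free; or weaken the hypothesis: the argument only needs $X/S^1$ paracompact of finite covering dimension, so that its cohomology is bounded above and the image of $u$ is nilpotent --- compactness is used for nothing else. Second, orientations: the pushforwards $\int_M$ and $(i_F)_*$ and the Thom classes presuppose $M$ oriented (implicit in the theorem as stated) and the normal bundles oriented; at an isolated fixed point the nonzero weights endow $T_F M$ with an invariant complex structure, which orients it canonically and pins down the sign of $e_F$. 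With those two sentences added, your argument is complete and is, in substance, the proof the paper's sources intend.
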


\begin{remark}
Sometimes, the integral $\int_M$ is called an
\textit{integration along the fiber $M$}. In
fact, as a Cartan model, every equivariant
cohomology class can be written as a sum of the
form $x \otimes u^k \in H^*_{S^1}(M) \cong H^*(M)
\otimes H^*(BS^1)$ and the operation $\int_M$
acts on the ordinary cohomology factor. Hence if
$\alpha \in H^*_{S^1}(M)$ is of degree less than
a dimension of $M$, then we have $$\int_M \alpha
= \sum_{F \in M^{S^1}} \frac{\alpha|_F}{e_F} =
0.$$
\end{remark}

When our manifold has a symplectic structure $\omega$ and the action preserves $\omega$, then the equivariant cohomology satisfies a remarkable property as follows.

\begin{theorem}\cite{Ki}\label{injectivity}
Let $(M,\omega)$ be a closed symplectic manifold
and $S^1$ act on $(M,\omega)$ in a Hamiltonian
fashion. Then the restriction map $i^*$ to the
fixed point set is injective.
\end{theorem}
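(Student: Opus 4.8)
The plan is to use the moment map $\mu : M \to \R$ of the Hamiltonian action as an $S^1$-invariant Morse--Bott function and to run the Atiyah--Bott--Kirwan Morse theory in equivariant cohomology. First I would record the local structure of $\mu$: its critical set is precisely the fixed point set $M^{S^1}$, and near a fixed component $F$ an equivariant Darboux chart writes $\mu$ as a nondegenerate quadratic form in the normal directions. Since the action is symplectic, the normal bundle to $F$ splits $S^1$-equivariantly into two-dimensional weight spaces on which $S^1$ rotates with nonzero weight; consequently $\mu$ is Morse--Bott, its negative normal bundle $N_F^-$ has even rank $\lambda_F$, and $S^1$ acts on $N_F^-$ without nonzero fixed vectors.

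Next I would set up the filtration by sublevel sets $M_c = \mu^{-1}(-\infty, c)$ and, for each critical value with critical submanifold $F$ (a union of fixed components, treated simultaneously), compare the sublevel sets $M_F^{\pm}$ just above and just below. The equivariant Thom isomorphism identifies $H^*_{S^1}(M_F^+, M_F^-) \cong H^{*-\lambda_F}_{S^1}(F)$, and the long exact sequence of the pair reads
$$\cdots \to H^{*-\lambda_F}_{S^1}(F) \xrightarrow{j} H^*_{S^1}(M_F^+) \xrightarrow{r} H^*_{S^1}(M_F^-) \to \cdots.$$
The crucial input is the Atiyah--Bott lemma: the equivariant Euler class $e_F := e_{S^1}(N_F^-) \in H^{\lambda_F}_{S^1}(F)$ is not a zero divisor in $H^*_{S^1}(F) \cong H^*(F) \otimes \R[u]$. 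Indeed, because $S^1$ acts with nonzero weights on the fibres, $e_F$ is a polynomial in $u$ of degree $\lambda_F / 2$ whose leading coefficient is the nonzero product of those weights. Since the composite $H^{*-\lambda_F}_{S^1}(F) \xrightarrow{j} H^*_{S^1}(M_F^+) \to H^*_{S^1}(F)$ is multiplication by $e_F$, the map $j$ is injective and each long exact sequence splits into a short exact sequence; this is the equivariant perfection of $\mu$.

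Finally I would prove injectivity of $i^*$ by induction up the filtration. The claim is that if $\alpha \in H^*_{S^1}(M)$ restricts to zero on every fixed component, then $\alpha|_{M_c} = 0$ for every $c$. At the minimum $F_{\min}$ the sublevel set retracts onto $F_{\min}$, so the base case is immediate. For the inductive step, suppose $\alpha|_{M_F^-} = 0$; by exactness $\alpha|_{M_F^+}$ lies in the image of $j$, say $\alpha|_{M_F^+} = j(\beta)$, and restricting to $F$ gives $0 = \alpha|_F = e_F \cdot \beta$. Since $e_F$ is not a zero divisor, $\beta = 0$ and hence $\alpha|_{M_F^+} = 0$. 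Pushing the induction to the top critical value gives $\alpha = 0$ on $M = M_{\max}$, which is exactly the injectivity of $i^*$. The main obstacle is the second step: verifying the Atiyah--Bott non-zero-divisor property and correctly identifying $j$ with multiplication by $e_F$ through the self-intersection formula, since this is precisely where the Hamiltonian hypothesis --- equivalently, the existence of the global invariant Morse--Bott function $\mu$ whose critical set is all of $M^{S^1}$ --- is indispensable.
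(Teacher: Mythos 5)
Your proof is correct and follows essentially the same route as the source the paper relies on: the paper states this theorem without proof, citing Kirwan \cite{Ki}, and your argument --- the moment map as an $S^1$-invariant Morse--Bott function with critical set $M^{S^1}$, the equivariant Thom isomorphism for the negative normal bundles, the Atiyah--Bott lemma that $e_{S^1}(N_F^-)$ is a non-zero-divisor in $H^*(F)\otimes \R[u]$ because the weights are nonzero, and the resulting splitting of the long exact sequences driving the induction up the sublevel sets --- is exactly Kirwan's proof. Your handling of the two delicate points (identifying $j$ followed by restriction to $F$ with multiplication by $e_{S^1}(N_F^-)$ via the self-intersection formula, and treating all fixed components at a common critical level simultaneously) is sound, so nothing is missing.
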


Theorem \ref{injectivity} enables us to study the
ring structure of $H^*_{S^1}(M)$ more easily via
the restriction map. For instance, assume that
all fixed points are isolated. Then
$H^*_{S^1}(M^{S^1})$ is nothing but $\bigoplus_{F
\in M^{S^1}} H^*(BS^1) \cong \bigoplus_{F \in
M^{S^1}} \R[u]$ where $u$ is a degree two
generator of $H^*(BS^1)$. Hence we can think of
an element $f \in H^*_{S^1}(M)$ as a function
$i^*(f)$ from the fixed point set $M^{S^1}$ to
the polynomial ring $\R[u]$ with one-variable
$u$. Also, for any elements $f$ and $g$ of
$H^*_{S^1}(M)$, the product $f \cdot g$ can be
computed by studying $i^*(f \cdot g)$, which is
simply the product of $i^*(f)$ and $i^*(g)$
component-wise.

Now, consider a Hamiltonian $S^1$-manifold
$(M,\omega)$ with a moment map $H : M \rightarrow
\R$. Then we may construct an equivariant
symplectic class on the Borel construction as
follows. For the product space $M \times ES^1$,
consider a two form $\omega_H := \omega + d(H
\cdot \theta)$, regarding $\omega$ as a pull-back
of $\omega$ along the projection $M \times ES^1
\rightarrow M$ and $\theta$ as a pull-back of a
connection 1-form on $ES^1$ along the projection
$M \times ES^1 \rightarrow ES^1$. It is not hard
to show that $\omega_H$ is $S^1$-invariant and
vanishes on the fiber $S^1$ over $M \times_{S^1}
ES^1$. So we may push-forward $\omega_H$ to the
Borel construction $M \times_{S^1} ES^1$ and
denote by $\widetilde{\omega}_H$ the push-forward
of $\omega_H$. Obviously, the restriction of
$\widetilde{\omega}_H$ on each fiber $M$ is
precisely $\omega$ and we call a class
$[\widetilde{\omega}_H] \in H^2_{S^1}(M)$ an
\textit{equivariant symplectic class with respect
to $H$.} By definition of $\widetilde{\omega}_H$,
we have the following proposition.

\begin{proposition}\label{easy}\cite{Au}
Let $F \in M^{S^1}$ be an isolated fixed point of
the given Hamiltonian circle action. Then we have
$$[\widetilde{\omega}_H]|_F = H(F)u.$$
\end{proposition}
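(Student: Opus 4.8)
The plan is to compute the restriction directly from the definition of $\widetilde{\omega}_H$, unwinding what the restriction map $\alpha \mapsto \alpha|_F = i_F^*(i^*(\alpha))$ does geometrically at an isolated fixed point. Since $F$ is fixed by the action, the $S^1$-action on the slice $\{F\} \times ES^1 \subset M \times ES^1$ is just the action on the second factor, so this slice is $S^1$-invariant and its image in the Borel construction $M \times_{S^1} ES^1$ is precisely the fiber $\{F\} \times_{S^1} ES^1 \cong BS^1$. Thus restricting $[\widetilde{\omega}_H]$ to $F$ amounts to pulling back $\widetilde{\omega}_H$ along the inclusion $BS^1 \hookrightarrow M \times_{S^1} ES^1$ induced by $\{F\} \hookrightarrow M$, and this is where I would begin.

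First I would restrict the $S^1$-invariant form $\omega_H = \omega + d(H \cdot \theta)$ to $\{F\} \times ES^1$ before passing to the quotient. The first summand $\omega$ is pulled back along the projection $M \times ES^1 \to M$, so its restriction to $\{F\} \times ES^1$ is the pullback of $\omega$ evaluated at the point $F$; since $\{F\}$ is zero-dimensional this two-form vanishes identically. For the second summand, because $H$ is constant on $\{F\}$ with value $H(F)$ and $\theta$ is the pullback of a connection one-form, we obtain $d(H \cdot \theta)|_{\{F\} \times ES^1} = H(F)\, d\theta$. Hence $\omega_H$ restricts to $H(F)\, d\theta$ on $\{F\} \times ES^1$.

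The remaining step is to descend this to $BS^1$ and identify the resulting class. Since $d\theta$ is basic, it descends to the curvature two-form on $BS^1 = ES^1/S^1$, whose cohomology class is, under the normalization $H^*(BS^1;\R)\cong \R[u]$, exactly the degree-two generator $u$. Because $H(F)$ is a scalar it passes through the push-forward, so that $[\widetilde{\omega}_H]|_F = H(F)\, u$, as claimed. Concretely, writing $\pi_F : \{F\} \times ES^1 \to BS^1$ for the quotient, the identity $\pi_F^*\big([\widetilde{\omega}_H]|_F\big) = \omega_H|_{\{F\}\times ES^1} = H(F)\,d\theta$ together with $\pi_F^*(u) = [d\theta]$ forces the stated equality.

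I expect no serious obstacle here, as the statement is essentially a bookkeeping consequence of the construction of $\widetilde{\omega}_H$; the only point requiring care is to keep the normalization consistent, so that the class $[d\theta]$ is identified with $u$ rather than with a nonzero scalar multiple of it. Fixing this convention once, by declaring $u := [d\theta]$ to be the chosen generator of $H^2(BS^1;\R)$, removes the only possible ambiguity and makes the computation above clean.
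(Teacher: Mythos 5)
Your computation is correct and is exactly the unwinding of the definition that the paper itself appeals to: the paper offers no proof beyond the remark ``By definition of $\widetilde{\omega}_H$, we have the following proposition'' and a citation to Audin, and your restriction of $\omega + d(H\cdot\theta)$ to $\{F\}\times ES^1$, giving $H(F)\,d\theta$ and hence $H(F)u$ after descending to $BS^1$, is the intended argument. Your closing remark about fixing the normalization $u=[d\theta]$ is the right (and only) point of care, since the paper only specifies $u$ as ``a degree two generator'' of $H^*(BS^1;\R)$.
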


\section{main theorem}

Let $(M,\omega)$ be a closed symplectic manifold,
$S^1$ be the unit circle group acting on
$(M,\omega)$ in a Hamiltonian fashion, and $H : M
\rightarrow \R$ be a moment map for the given
action. For each connected fixed component $F
\subset M^{S^1}$, let $k_F$ be the index of $F$
with respect to $H$. Let $\nu_{F}$ be a normal
bundle of $F$ in $M$. Then the \textit{negative
normal bundle} $\nu^{-}_{F}$ of $F$ is defined by
a sub-bundle of $\nu_F$ whose fiber is contained
in an unstable submanifold of $M$ at $F$ with
respect to $H$. We denote by $e^{-}_F \in
H^*_{S^1}(F)$ the equivariant Euler class of
$\nu^{-}_F$. D. McDuff and S. Tolman found a
remarkable family of equivariant cohomology
classes as follows.

\begin{theorem}\cite{McT}\label{McT}
Let $(M,\omega)$ be a closed symplectic manifold
equipped with a Hamiltonian circle action with a
moment map $H : M \rightarrow \R$. Let $F$ be a
fixed component of the action. Then given any
cohomology class $Y \in H^i(F)$, there exists the
unique class $\widetilde{Y} \in H^{i + k_F}(M)$
such that
\begin{enumerate}
\item the restriction of $\widetilde{Y}$ to
$M^{<H(F)}$ vanishes,
\item $\widetilde{Y}|_F = Y \cup e^{-}_F$, and
\item the degree of
$\widetilde{Y}_{F'} \in H^*_{S^1}(F')$
is less than the index $k_{F'}$ of $F'$
for all fixed components $F' \neq F.$
\end{enumerate}
\end{theorem}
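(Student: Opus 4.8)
The plan is to prove existence and uniqueness together by Morse theory for the moment map $H$, whose critical set equals $M^{S^1}$ and whose Morse index at a fixed component $F$ is the rank $k_F$ of the negative normal bundle $\nu^{-}_F$. I would label the fixed components $F_1,\dots,F_N$ so that $H(F_1)\le\cdots\le H(F_N)$ and filter $M$ by the sublevel sets $M^{\le c}=H^{-1}((-\infty,c])$. Two inputs drive everything. The first is Theorem~\ref{injectivity}, which says an equivariant class is detected by its restrictions to the fixed set; the same Morse argument shows this injectivity also holds on each sublevel set $M^{\le c}$. The second is the equivariant Thom isomorphism: for regular values $c<H(F')<c'$ bracketing a single critical level, the relative group $H^*_{S^1}(M^{\le c'},M^{\le c})$ is, via the negative normal bundle, isomorphic to $\bigoplus H^{*-k_{F'}}_{S^1}(F')$ summed over the components at level $H(F')$, and the restriction to $F'$ of a Thom lift is cup product with the Euler class $e^{-}_{F'}$ of degree $k_{F'}$. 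In particular, any class vanishing on $M^{<H(F')}$ has $e^{-}_{F'}$ dividing its restriction to $F'$.

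Uniqueness is the cleaner half. If $\widetilde{Y}$ and $\widetilde{Y}'$ both satisfy (1)--(3), set $\delta=\widetilde{Y}-\widetilde{Y}'$; I would show $\delta|_{F'}=0$ for every $F'$ by induction on $H(F')$. For $H(F')<H(F)$ this is condition~(1) and for $F'=F$ it is condition~(2). For $H(F')>H(F)$, the inductive hypothesis gives $\delta|_{F''}=0$ at every $F''$ below $F'$, so sublevel injectivity forces $\delta$ to vanish on $M^{<H(F')}$; the divisibility principle then gives $e^{-}_{F'}\mid\delta|_{F'}$. Since condition~(3) bounds $\deg\delta|_{F'}<k_{F'}=\deg e^{-}_{F'}$, this forces $\delta|_{F'}=0$. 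Injectivity on $M$ then yields $\delta=0$.

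For existence I would first produce a class $\alpha$ satisfying (1) and (2). Lifting the product $Y\cup\tau^{-}_F$ of $Y$ with the equivariant Thom class $\tau^{-}_F$ of $\nu^{-}_F$ gives a class in $H^*_{S^1}(M^{\le c'})$, for $c'$ just above $H(F)$, that vanishes on $M^{<H(F)}$ and restricts to $Y\cup e^{-}_F$ on $F$; because the filtration is equivariantly perfect the restriction $H^*_{S^1}(M)\to H^*_{S^1}(M^{\le c'})$ is surjective, so $\alpha$ lifts to $M$. For each $F'$ with $H(F')>H(F)$ the same Thom construction provides an auxiliary class $\beta_{F'}$ that vanishes on $M^{<H(F')}$ and restricts to $e^{-}_{F'}$ at $F'$. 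I would then enforce (3) by a downward-triangular correction: processing the components $F'$ above $F$ in increasing order of $H$, I subtract from the current class a combination $\sum_j c_j\,u^j\beta_{F'}$ whose restriction to $F'$ equals the part of the current restriction of degree $\ge k_{F'}$; this is possible because $\{u^j e^{-}_{F'}\}_{j\ge0}$ spans exactly that range of degrees. Since each $\beta_{F'}$ vanishes at all fixed components strictly below $F'$, the correction leaves every lower restriction untouched, so (1), (2), and the conditions already arranged at lower components persist; after finitely many steps all of (3) holds.

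The main obstacle will be the spanning statement used in the correction step, namely that the Thom lifts $\beta_{F'}$, after multiplication by powers of $u$, account for every summand of the current restriction at $F'$ of degree $\ge k_{F'}$. When the fixed points are isolated, $F'$ is a point, $e^{-}_{F'}$ is a nonzero scalar multiple of $u^{k_{F'}/2}$, and $\{u^j e^{-}_{F'}\}_{j\ge0}$ visibly spans all monomials of degree $\ge k_{F'}$, so the correction is immediate. For a positive-dimensional component one must instead check that cup product with $e^{-}_{F'}$ hits the whole degree-$\ge k_{F'}$ range of $H^*_{S^1}(F')$, which is exactly what the Thom isomorphism supplies; verifying this compatibility between the Thom isomorphism, the degree grading, and the $H^*(BS^1)$-module structure is the technical heart of the argument, while the rest is bookkeeping organized by the moment-map filtration.
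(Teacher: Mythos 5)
The paper itself offers no proof of Theorem \ref{McT}---it is imported wholesale from \cite{McT}---so your proposal has to be measured against the argument in that reference, which is indeed the Morse-theoretic scheme you describe. Your uniqueness half is correct and essentially identical to the McDuff--Tolman argument: upward induction on critical levels of $H$, the divisibility $e^{-}_{F'} \mid \delta|_{F'}$ coming from the Thom isomorphism together with the Atiyah--Bott observation that $e^{-}_{F'}$ is not a zero divisor in $H^*_{S^1}(F')$, and Kirwan injectivity to conclude $\delta = 0$. (One small omission: your case split skips components $F' \neq F$ lying on the level $H(F)$ itself; the same divisibility-plus-degree argument disposes of them, since the relative group at a critical level splits as a direct sum over the components on that level.)

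The existence half, however, has a genuine gap exactly at the step you flag as the ``technical heart,'' and the flag is warranted: the claim that cup product with $e^{-}_{F'}$ hits all of $H^*_{S^1}(F')$ in degrees $\geq k_{F'}$ is \emph{not} what the Thom isomorphism supplies, and it is false for positive-dimensional $F'$. The Thom isomorphism composed with restriction to $F'$ gives an \emph{injection} $H^{*-k_{F'}}_{S^1}(F') \rightarrow H^{*}_{S^1}(F')$ whose cokernel in total degree $d$ has dimension $\sum \dim H^m(F')$, the sum over $d - k_{F'} < m \leq d$ with $m \equiv d \ (\mathrm{mod}\ 2)$, so your correction step can be obstructed. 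Worse, the obstruction is realized: take $M = S^2 \times S^2$ with $S^1$ rotating the first factor, $H$ the height on that factor, $F = F_{\min} = \{s\} \times S^2$, and $Y \in H^2(F)$ the area class (so $i = 2$, $k_F = 0$, $e^-_F = 1$). Every $\widetilde{Y} \in H^2_{S^1}(M)$ with $\widetilde{Y}|_{F_{\min}} = Y$ satisfies $\widetilde{Y}|_{F_{\max}} = Y + cu$ for some $c \in \R$ (restrictions of degree-two classes to the two fixed spheres differ by an element of $\R u$), which is never zero, while condition (3) with $k_{F_{\max}} = 2$ forces $\widetilde{Y}|_{F_{\max}} = 0$. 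So no canonical class exists, and existence as literally stated for arbitrary fixed components cannot be proved by your argument or any other; in \cite{McT} the unconditional general statement is the uniqueness (``at most one''), with existence requiring additional hypotheses. In the isolated-fixed-point case---the only case this paper uses, where $e^{-}_{F'}$ is a nonzero multiple of $u^{k_{F'}/2}$ and the cokernel above vanishes---your downward-triangular correction does close, and there your proof is complete. The honest fix is to restrict the statement to isolated fixed points (which suffices for Theorem \ref{main}) or to add the cohomological vanishing at each $F'$ above $F$ that kills the cokernel.
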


We call such class $\widetilde{Y}$ a
\textit{canonical class} with respect to $Y$. In
this case when all fixed points are isolated, let
$F$ be a fixed point of index $k_F$ and $1_F \in
H^0(F)$ be the identity element of $H^*(F)$. Then
Theorem \ref{McT} implies that there exists the
unique class $\alpha_F \in H^{k_F}_{S^1}(M)$ such
that
\begin{enumerate}
\item $\alpha_F |_{F'} = 0$ for every
$F' \in M^{S^1}$ with $H(F') < H(F)$,
\item $\alpha_F |_F = e^{-}_F = \prod w^{-}_iu$,
where $w^{-}_i$ is the negative weight of
$S^1$-representation on $T_F M$
for $i=1, \cdots, \frac{k_F}{2}$,
\item $\alpha_F |_{F'} = 0$
for every $F' \ne F \in M^{S^1}$ with $k_{F'}
\leq k_F$.
\end{enumerate}

Now, we prove our main theorem.

\begin{proof}[Proof of Theorem \ref{main}]
By the connectivity of $M$, the first inequality
is obvious. Now, let's assume that $b_2(M) >
b_4(M)$. Let $z_1, \cdots, z_k$ be the fixed
points of index 2. Also, we denote by $\alpha_i
\in H^2_{S^1}(M)$ the canonical class with
respect to $z_i$ where $k=b_2(M)$. Then our
assumption $b_2(M) > b_4(M)$ implies that there
is a non-zero class $\alpha = \sum_{i=1}^{k} c_i
\alpha_i$ such that $\alpha |_ F = 0$ for every
fixed point $F$ of index 4. In other words,
$\alpha$ can survive only on the fixed points of
index 2, 6, or 8. Now, consider an equivariant
symplectic class $\widetilde{\omega}_H$ such that
the maximum of $H$ is zero. If we let $\beta =
\alpha^2 \cdot \widetilde{\omega}$, then for each
fixed point $F$, the restriction $\beta|_F =
(\alpha|_F)^2 \cdot H(F)u$ by Proposition
\ref{easy}. Since we took our moment map
satisfying $\max H = 0$, we have $H(F) < 0$ for
every fixed point $F$ which is not maximal. Hence
all coefficients of $\beta|_F$ is non-positive.
In particular, $\beta|_F$ is non-zero for some
fixed point $F$ of index 2. Hence  $\beta$ is
non-zero class of degree 6 in $H^*_{S^1}(M)$ and
survives only on the fixed points of index 2 or
6. Applying the localization theorem to $\beta$,
we have
$$ 0 = \int_M \beta
= \sum_{F \in M^{S^1}} \frac{\beta|_F}{e_F} =
\sum_{F \in M^{S^1}, \atop \textrm{ind}(F)=2}
\frac{\beta|_F}{e_F} + \sum_{F \in M^{S^1}, \atop
\textrm{ind}(F)=6}\frac{\beta|_F}{e_F} .$$ Since
the coefficient of $u^4$ of $e_F$ is negative for
every fixed point $F$ of index 2 or 6, it is a
contradiction.

\end{proof}

\end{document}